\documentclass[12pt]{article}

\usepackage{amssymb}%
\usepackage{amsmath}%
\usepackage{amsthm}%
\usepackage{hyperref}%
\usepackage{graphicx}%
\usepackage{xcolor}%
\usepackage{mathrsfs}

\allowdisplaybreaks%

{\theoremstyle{plain}%
 \newtheorem{theorem}{Theorem}
 
 \newtheorem{lemma}{Lemma}%
}
{\theoremstyle{remark}
\newtheorem{remark}{Remark}
}
{\theoremstyle{definition}

}

\begin{document}

\begin{center}
{\large On the binary digits of the Erd\H{o}s--Borwein constant}

 \ 
 
{\sc John M. Campbell}

\vspace{0.1in}

{\footnotesize Department of Mathematics and Statistics}

{\footnotesize Dalhousie University}

{\footnotesize Halifax, NS B3H 4R2}

{\footnotesize Canada}

\vspace{0.1in}

{\footnotesize {\tt jh241966@dal.ca}}

\vspace{0.1in}

\end{center}

\begin{abstract}
 In a landmark paper on arithmetical properties of Lambert series, Erd\H{o}s proved that $ \sum_{n=1}^{\infty} \frac{1}{2^{n} - 1}$ is 
 irrational. This value $E$ is now referred to as the \emph{Erd\H{o}s--Borwein constant}. Crandall, in 2012, studied properties of the 
 base-2 expansion of this constant, and left the following as an open problem: Does the string $11$ occur infinitely often in the base-2 
 expansion of $E$? This open problem was also subsequently noted by Shallit. We succeed in introducing a full proof that solves 
 Crandall's problem in the affirmative. Our proof combines a congruence construction in the spirit of Erd\H{o}s and an estimate due to 
 Alford, Granville, and Pomerance for the counting function for primes in arithmetic progressions. Our argument was developed through 
 extensive interactions with GPT-5.5 Pro. 
\end{abstract}

\vspace{0.1in}

\noindent {\footnotesize \emph{MSC:} 11A63, 11A25}

\vspace{0.1in}

\noindent {\footnotesize \emph{Keywords:} divisor, digit, binary expansion, 
 Erd\H{o}s--Borwein constant, Lambert series, Chinese remainder theorem}

\section{Introduction}
 The arithmetic function given by the integer sequence $(d(n) : n \in \mathbb{N})$ for the number $d(n)$ of positive divisors of $n \in 
 \mathbb{N}$ often arises in deep and active areas of number theory. There are many open problems concerning the behavior of the given 
 sequence, and research devoted to these open problems helps to give light to broader areas related to divisibility properties of integers. 
 This motivates the purpose of our paper, which concerns an open problem due to Crandall \cite{Crandall2012} 
 on the \emph{Erd\H{o}s--Borwein constant} defined by 
\begin{equation}\label{displayE}
 E = \sum_{n=1}^{\infty} \frac{1}{2^{n} - 1} = \sum_{n=1}^{\infty} \frac{d(n)}{2^n}. 
\end{equation}
 Observe that the equivalence of the two series in \eqref{displayE} is justified by writing $$ \sum_{a=1}^{\infty} \frac{1}{2^{a} - 1} = 
 \sum_{a=1}^{\infty} \sum_{b=1}^{\infty} 2^{-ab} $$
 and by making use of absolute convergence. 

 \emph{Lambert series} broadly refer to (convergent) series of the form 
\begin{equation}\label{displayLaurent}
 \sum_{n=1}^{\infty} a_n \frac{q^n}{1-q^n} = \sum_{n=1}^{\infty} a_{n} \sum_{k=1}^{\infty} q^{n k} 
\end{equation}
 for a given sequence $(a_n : n \in \mathbb{N})$ and naturally arise in both number theory and the application of special functions, which 
 suggests an inherently interdisciplinary nature of infinite sums as in \eqref{displayLaurent}. Erd\H{o}s, in a 1948 paper 
 \cite{Erdos1948}, considered the function 
\begin{equation}\label{Erdosf}
 f(x) = \sum_{n=1}^{\infty} \frac{x^n}{1-x^n}
\end{equation}
 and proved that $f\left( \frac{1}{t} \right)$ is irrational for any integer $t$ such that $t > 1$ (cf.\ \cite{Vandehey2013}), with the $t = 2$ 
 case yielding the constant in \eqref{displayE}. Crandall, in a 2012 paper~\cite{Crandall2012}, studied the behavior of the binary digits of
 $E$, and applied bounds on the divisor function to determine the parity of
 $ \big\lfloor 2^{{10}^{100}} E \big\rfloor$. 
 In the same paper, 
 Crandall left it as an open problem to determine whether or not 
 the string $11$ appears infinitely often in the base-$2$ expansion 
\begin{equation}\label{EBnumerical}
 E = 1.1001101101010000010111111001111001000011111100100010\ldots, 
\end{equation}
 and this open problem was also highlighted by Shallit in the review of Crandall's work in the Mathematical Reviews database\footnote{See 
 the MathSciNet entry indexed as MR2988549.} In our current paper, we solve Crandall's problem in the affirmative, 
 via an argument developed through extensive interactions with GPT-5.5 Pro (see Acknowledgments section below). 
 Based on extant 
 research related to Crandall's work on the expansion in 
 \eqref{EBnumerical} \cite{AragonArtachoBaileyBorweinBorwein2013,Murakami2025,MurakamiTachiya2025}, 
 it appears that this problem has remained open, subsequent to the above 2012 paper from Crandall. 

 As suggested by Crandall \cite{Crandall2012}, the open problem given above is motivated by the problem of determining whether or 
 not $E$ is 2-normal, with regard to the work of Bailey and Crandall \cite{BaileyCrandall2002}, who proved necessary and sufficient 
 conditions for $E$ to be $2$-normal. The extent of research interest in Crandall's problem may also be seen in relation to a number of 
 notable research works concerning the Erd\H{o}s--Borwein constant. In this direction, Vandehey \cite{Vandehey2013} built upon 
 Erd\"os's proof \cite{Erdos1948} to show that $f\left( \frac{1}{b} \right)$ is irrational in base $|b|$ for integers $b < -1$. 

\section{Erd\H{o}s's irrationality proof}\label{secErdos}
 The key to Erd\H{o}s's proof \cite{Erdos1948} of the irrationality of \eqref{Erdosf} for the $x = \frac{1}{t}$ case for an integer $t > 1$ is 
 given by showing how the base-$t$ expansion of $f\big( \frac{1}{t} \big)$ contains strings of $0$-digits of arbitrary length, compared to 
 how this base-$t$ expansion is not finite. This is outlined below. 

 Let $t$ and $n$ be positive integers, and let $k = \big\lfloor \log^{1/10} n \big\rfloor$. Let $ p_{m_1}$, $p_{m_{2}}$, $\ldots$, 
 denote the consecutive 
 primes greater than $\log^2 n$, and set 
 $$ A = \prod_{1 \leq i \leq \frac{k(k+1)}{2}} p_{m_{i}}^{t}. $$
 Basic properties concerning prime distributions then give us that 
 $p_{m_i} < 2 \log^2 n$ for $ i \leq \frac{k(k+1)}{2}$. In turn, this can be used to show 
 that 
 $$ A < \left( 2 \log^2 n \right)^{t k^2} < e^{\log^{1/4} n}. $$
 Write $u = \frac{k(k-1)}{2} + 1$. 
 Erd\H{o}s's proof relies on solving the system 
\begin{align*}
 x & \equiv p_{m_{1}}^{t-1} \, \left(\operatorname{mod} \, p_{m_{1}}^{t} \right), \\ 
 x + 1 & \equiv \left( p_{m_{2}} p_{m_{3}} \right)^{t-1} \, \left(\operatorname{mod} \, \left( p_{m_{2}} p_{m_{3}} \right)^{t} 
 \right), \\ 
 & \cdots \\ 
 x + k - 1 & \equiv \left( p_{m_{u}} p_{m_{u+1}} \cdots p_{m_{u + k -1}} \right)^{t-1}
 \, \left(\operatorname{mod} \, \left( p_{m_u} p_{m_{u+1}} 
 \cdots p_{m_{u+k-1}} \right)^{t} \right) 
\end{align*}
 of congruences. 	 Integers satisfying the above simultaneous congruences are determined by the Chinese remainder theorem, and such 
 integers less than $n$ are of the form $$ x + y A $$ for $x$ and $y$ such that $0 < x < A$ and $0 \leq y < \big\lfloor \frac{n}{A} 
 \big\rfloor$. Erd\H{o}s's construction then gives us that 
 $$ d\left( x + y A + j \right) \equiv 0 \, \left(\operatorname{mod} \, t^{j+1} \right) $$
 for all $j$ such that $0 \leq j < k$. So, by expanding 
\begin{equation}\label{expandt}
 \sum_{r < x + y A + k} \frac{d(r)}{t^r} 
\end{equation}
 in base $t$, we obtain that $t^{-x-yA +1}$ will then be the lowest
 power of $t$ occurring. 

 Omitting details, by then arguing that the ``tail'' corresponding to \eqref{expandt}, i.e., the value
\begin{equation}\label{appropriatetail}
 \sum_{r \geq x + y A + k} \frac{d(r)}{t^r}, 
\end{equation}
 is appropriately bounded, one can conclude that the base-$t$ expansion of $f\big( \frac{1}{t} \big)$ will involve at least $\frac{k}{2}$ 
 consecutive zeroes. The remainder of Erd\H{o}s's argument is thus devoted to appropriately bounding the tail in 
 \eqref{appropriatetail}, and this largely relies on a variety of uses of bounds involving the divisor function or its partial sums, 
 referring again to Erd\H{o}s's original proof for details \cite{Erdos1948}. 

 \section{A full solution}
 A key to our solution to Crandall's problem relies on an application of the Chinese remainder theorem related to Erd\H{o}s's application 
 of the CRT \cite{Erdos1948} outlined in Section \ref{secErdos}. 
 We provide, as below, a formal statement of the CRT 
 appropriate for our purposes. 

 \ 

\noindent {\bf Chinese remainder theorem:} Let $M_{1}$, $M_{2}$, $\ldots$, $M_t$ be positive integers such that $(M_i, M_j) = 1$ for $i 
 \neq j$. Let $a_1$, $a_2$, $\ldots$, $a_t$ be arbitrary integers. Then the system of congruences of the form 
 $$ x \equiv a_i \pmod {{M}_{i}} $$
 for $1 \leq i \leq t$ has a solution $x \in \mathbb{Z}$. Moreover, 
 the solution is unique modulo $M_1 M_2 \cdots M_t$. 

 \ 

 As in the work of Vandehey related to the irrationality of the Erd\H{o}s--Borwein constant \cite{Vandehey2013}, we require the following 
 lemma attributed to Alford, Granville, and Pomerance \cite{AlfordGranvillePomerance1994}. As below, we write $\pi(N; d, a)$ in place 
 of the number of primes not exceeding $N$ and congruent to $a$ modulo $d$. This 
 counting function $\pi(N; d, a)$ 
 may be distinguished from the prime-counting function $\pi(x)$ giving the number of primes not 
 exceeding $x$. Also, as below, we are letting $\varphi$ denote Euler's totient 
 function, which is defined so that $\varphi(n)$ is equal to the number of integers $k \in [1, n]$ such that $(n, k) = 1$. 
 
 While the following result is properly attributed to 
 Alford et al., this is actually a consequence or a special case of
 material in their work given by Vandehey (see Remark \ref{remarkAGP} below). 
 
\begin{lemma}\label{Alfordetal}
 (Alford et al., 1994) For $\delta$ such that $0 < \delta < \frac{5}{12}$, there are positive integers $N_{0} = N_{0}(\delta)$ and 
 $\overline{\mathcal{D}} = \overline{\mathcal{D}}(\delta)$ such that 
\begin{equation}\label{countarithmetic} 
 \pi(N; d, a) \geq \frac{N}{ 2 \varphi(d) \log N } 
\end{equation}
 holds: 

\vspace{0.1in}

\noindent $\bullet$ For all $N > N_ 0$; 

\vspace{0.1in}

\noindent $\bullet$ For all moduli $d$ such that $1 \leq d \leq N^{\delta}$, apart from the possibility of those $d$ that are multiples of 
 certain elements in $\mathcal{D}(N)$, a set consisting of at most $\overline{\mathcal{D}}$ distinct integers all exceeding $\log N$; and 

\vspace{0.1in}

\noindent $\bullet$ For all integers $a$ coprime with $d$ \cite{AlfordGranvillePomerance1994} (cf.\ \cite{Vandehey2013}). 

\end{lemma}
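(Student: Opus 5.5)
This lemma is not proved from scratch; I would derive it from Theorem 2.1 of Alford, Granville, and Pomerance \cite{AlfordGranvillePomerance1994}, in the form Vandehey extracts it \cite{Vandehey2013}. Their theorem gives, for fixed $\delta$ with $0<\delta<\frac{5}{12}$, a bound $\overline{\mathcal{D}}(\delta)$ and a threshold $x_0(\delta)$. For $x \geq x_0$ it provides an exceptional set $\mathcal{D}(x)$ of at most $\overline{\mathcal{D}}$ integers, each exceeding $\log x$, such that
$$ \pi(x; d, a) \geq \frac{\pi(x)}{2\varphi(d)} $$
for every $d \leq x^{\delta}$ not divisible by any element of $\mathcal{D}(x)$ and every $a$ coprime to $d$. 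The constant $\frac{5}{12}$ is the admissible range in their Linnik-type density estimate combined with the Deuring--Heilbronn repulsion of Siegel zeros.

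The plan has three steps.

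\textbf{Step 1: identify the parameters.} I would check that the quantifiers in their statement match the three bullets of the lemma: $N_0=x_0(\delta)$, the same $\overline{\mathcal{D}}(\delta)$, and the condition that exceptional moduli exceed $\log N$.

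\textbf{Step 2: pass from $\pi(N)$ to $N/\log N$.} Since $\pi(N) \geq N/\log N$ for $N\geq 17$ (Rosser--Schoenfeld), the bound $\pi(N;d,a) \geq \pi(N)/(2\varphi(d))$ yields \eqref{countarithmetic} once $N_0$ is enlarged to at least $17$. If the version cited carries a slightly weaker constant, for example $\pi(x)/(2\varphi(d))$ only after a small loss, there is a fallback. Choose $\delta' \in (\delta, \frac{5}{12})$ and apply the theorem at $\delta'$. Then use the prime number theorem $\pi(N) \sim N/\log N$ to absorb the loss, at the cost of enlarging $N_0$. The set $\mathcal{D}(N)$ and its cardinality bound then carry over unchanged, because restricting to $d\le N^{\delta}\le N^{\delta'}$ only shrinks the range of moduli.

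\textbf{Step 3: check the exceptional-set formulation.} The main obstacle is bookkeeping rather than mathematics. The exclusion must be of moduli that are \emph{multiples} of elements of $\mathcal{D}(N)$, not just those elements themselves. The reason is that a primitive character associated with an exceptional zero induces characters of every modulus it divides. AGP's proof handles exactly this case: if $d$ is not a multiple of any exceptional conductor, then no character mod $d$ is induced from an exceptional primitive character. Their zero-density argument then gives the lower bound uniformly in $a$ with $(a,d)=1$.

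I would therefore cite that statement directly, and note only the normalization in Step 2 and the uniformity in $a$, which holds because the character-sum expansion treats all $a$ coprime to $d$ symmetrically.
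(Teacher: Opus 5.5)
Your proposal is correct and takes essentially the paper's approach: the paper also gives no independent argument, deferring to the property of $\pi(y;d,a)$ on page 705 of Alford--Granville--Pomerance in Vandehey's formulation, and your explicit passage from $\pi(N)/(2\varphi(d))$ to $N/(2\varphi(d)\log N)$ via $\pi(N)\ge N/\log N$ for $N\ge 17$ is a correct detail the paper leaves implicit. You should drop the Step~2 fallback, since enlarging $\delta$ to $\delta'$ cannot recover a lost constant factor, and it is not needed because AGP's constant is exactly $\frac{1}{2}$.
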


\begin{remark}\label{remarkAGP}
 The above result, as given by Vandehey \cite{Vandehey2013}
 and attributed to Alford et al.\ \cite{AlfordGranvillePomerance1994}, 
 follows from a property of $\pi(y;d,a)$ given on page 705 of the given 
 Alford--Granville--Pomerance paper. 
\end{remark}

 We proceed to construct an average divisor bound associated with arithmetic sequences satisfying certain properties, as below. 

\begin{lemma}\label{basiclogbound}
 Let $a, A, M \in \mathbb{N}$, let $Y \geq 3$, and suppose that 
\begin{equation}\label{aA1} 
 (a, A) = 1 
\end{equation}
 and that 
\begin{equation}\label{aM1Ay} 
 a + (M - 1) A \leq Y. 
\end{equation}
 Then $$ \sum_{m=0}^{M-1} d(a + m A) \leq 2 M \left( 1 + \frac{1}{2} \log Y \right) + 2 \sqrt{Y}. $$ Moreover, if $ \sqrt{Y} \leq M \log 
 Y$, then 
\begin{equation}\label{withsqrtMlog}
 \sum_{m=0}^{M-1} d(a + m A) \leq 5 M \log Y. 
\end{equation}
\end{lemma}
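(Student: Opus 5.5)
We will bound the sum by the standard divisor-pair argument. Every divisor $e$ of a positive integer $n$ is paired with $n/e$, and at least one of the two is at most $\sqrt{n}$. Hence
$$ d(n) \leq 2 \, \#\{ e \mid n : e \leq \sqrt{n} \} \leq 2 \, \#\{ e \mid n : e \leq \sqrt{Y} \} $$
whenever $n \leq Y$. By \eqref{aM1Ay}, every term $n = a + mA$ with $0 \leq m \leq M-1$ satisfies $n \leq Y$. Summing over $m$ and interchanging the order of summation gives
$$ \sum_{m=0}^{M-1} d(a+mA) \leq 2 \sum_{e \leq \sqrt{Y}} \#\{ 0 \leq m \leq M-1 : e \mid a + mA \}. $$

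The key step is to count, for fixed $e$, the $m$ with $e \mid a+mA$. If $(e,A) = g > 1$, then $g \mid a + mA$ forces $g \mid a$, contradicting \eqref{aA1}. So only $e$ coprime to $A$ contribute. For such $e$, the congruence $mA \equiv -a \pmod{e}$ has exactly one solution class modulo $e$. Among $M$ consecutive values of $m$, this class occurs at most $M/e + 1$ times. Therefore
$$ \sum_{m=0}^{M-1} d(a+mA) \leq 2 \sum_{e \leq \sqrt{Y}} \left( \frac{M}{e} + 1 \right) \leq 2M \sum_{e \leq \sqrt{Y}} \frac{1}{e} + 2\sqrt{Y}. $$
The elementary bound $\sum_{e \leq x} 1/e \leq 1 + \log x$ with $x = \sqrt{Y}$ gives $1 + \tfrac12 \log Y$, which yields the first inequality.

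For \eqref{withsqrtMlog}, we substitute $\sqrt{Y} \leq M \log Y$ into the first inequality:
$$ \sum_{m=0}^{M-1} d(a+mA) \leq 2M + M\log Y + 2M\log Y = 2M + 3M\log Y. $$
Since $Y \geq 3$ implies $\log Y > 1$, we have $2M \leq 2M\log Y$, and the total is at most $5M\log Y$.

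No step presents a real obstacle. The only points needing care are the divisor pairing, which uses $n \leq Y$ rather than $n \leq \sqrt{Y}$ directly, and the use of coprimality in \eqref{aA1} to make each divisor class unique.
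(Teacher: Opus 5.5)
Your proposal is correct and follows essentially the same route as the paper. Both arguments use divisor pairing to restrict to divisors at most $\sqrt{Y}$, interchange the summation, use $(a,A)=1$ to discard $e$ with $(e,A)>1$ and to get a single residue class (hence at most $M/e+1$ values of $m$) otherwise, apply the harmonic bound $1+\tfrac12\log Y$, and finish by using $\log Y>1$ to absorb $2M$ and $2\sqrt{Y}$ into $5M\log Y$.
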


\begin{proof}
 For $0 \leq m < M$, we write
\begin{equation}\label{defineNm} 
 N_{m} = a + m A, 
\end{equation}
 so that \eqref{aM1Ay} and \eqref{defineNm} together with the upper bound on $m$ give us that $N_{m} \leq Y$. For each divisor $u 
 \mid N$, pair $u$ with $\frac{N}{u}$. Since $N \leq Y$, we have $\sqrt{N} \leq \sqrt{Y}$. 
 For each pairing of an expression $ u$ with $ \frac{N}{u}$, 
 at least one expression among $u$ and $\frac{N}{u}$ is $\leq \sqrt{N} \leq \sqrt{Y}$. 
 So, every divisor of $N$ is accounted for by at most doubling the divisors $h \mid N$ satisfying $h \leq \sqrt{Y}$, 
 i.e., so that 
\begin{equation}\label{pairingdivisor}
 d(N) \leq 2 \sum_{\substack{h \leq \sqrt{Y} \\ h \mid N}} 1 
\end{equation}
 for $N \leq Y$. From \eqref{pairingdivisor}, we write
\begin{equation}\label{reducedouble}
 \sum_{m=0}^{M-1} d(a + m A) \leq 2 \sum_{m=0}^{M-1} \sum_{\substack{ h \leq \sqrt{Y} \\ h \mid (a + m A) }} 1, 
\end{equation}
 and we rewrite the upper bound in \eqref{reducedouble} so that 
\begin{equation}\label{rewritehash} 
 \sum_{m=0}^{M-1} d(a + mA) \leq 2 \sum_{h \leq \sqrt{Y}} \#\{ m \in [0, M) : 
 h \mid (a + m A) \}. 
\end{equation}
 Now, let $h \leq \sqrt{Y}$ be fixed, 
 and consider values $m \in [0, M)$ such that the congruence
\begin{equation}\label{amAequiv0}
 a + m A \equiv 0 \pmod h 
\end{equation} 
 holds. We proceed to consider two cases, as below. 

 First, suppose that $(h, A) > 1$, writing $g = (h, A) > 1$. So, if $h \mid (a + mA)$ (noting that this is equivalent to 
 \eqref{amAequiv0}), then 
\begin{equation}\label{gmidamA}
 g \mid (a + m A). 
\end{equation}
 However, since $g \mid A$, we have that $g \mid mA$. This consequence together with \eqref{gmidamA} allow us to deduce that $g 
 \mid a$, but $g > 1$ and we thus have a common divisor of $a$ and $A$ strictly greater than $1$, 
 contradicting \eqref{aA1}. 
 So, we have shown that: If $(h, A) > 1$, then no value $m \in [0, M)$ satisfies the congruence in \eqref{amAequiv0}. 

 Now, suppose that $(h, A) = 1$. If $h = 1$, then every integer $m \in [0, M)$ satisfies the congruence in \eqref{amAequiv0}, so that the 
 number of solutions $m \in [0, M)$ to \eqref{amAequiv0}
 is $M \leq \frac{M}{h} + 1$.
 If $h > 1$, then $A$ is invertible (multiplicatively) modulo $h$, 
 so that the relation in \eqref{amAequiv0} is equivalent to
\begin{equation}\label{minresidue} 
 m \equiv -aA^{-1} \pmod h. 
\end{equation}
 In general, we have that the number of integers in $ [0, M)$ and in a fixed residue class modulo $h$ is bounded above by $ \big\lceil 
 \frac{M}{h} \big\rceil \leq \frac{M}{h} + 1$. So, since $m$ is necessarily in the residue class in \eqref{minresidue}, 
 we find that $$ \#\{ m\in [0, M) : h \mid (a + m A) \} \leq \begin{cases} 
 0, & \text{if $(h, A) > 1$}, \\
 \frac{M}{h} + 1, & \text{if $(h, A) = 1$,} 
 \end{cases} $$
 so that \eqref{rewritehash} then gives us that 
\begin{align*}
 \sum_{m=0}^{M-1} d(a + m A) 
 & \leq 2 \sum_{\substack{ h \leq \sqrt{Y} \\ (h, A) = 1 }} 
 \left( \frac{M}{h} + 1 \right) \\ 
 & \leq 2 \sum_{\substack{ h \leq \sqrt{Y} }} 
 \left( \frac{M}{h} + 1 \right).
\end{align*}
 The desired result then follows by estimating this latter sum in a standard way. More explicitly, 
 we write
\begin{equation}\label{applyharmonic}
 2 \sum_{h \leq \sqrt{Y}} \left( \frac{M}{h} + 1 \right) 
 = 2 M \sum_{h \leq \sqrt{Y}} \frac{1}{h} + 2 \sum_{h \leq \sqrt{Y}} 1, 
\end{equation}
 and then apply to \eqref{applyharmonic} a standard harmonic number estimate. 
 Explicitly, this estimate is such that 
\begin{equation}\label{secondharmonic} 
 \sum_{h \leq \sqrt{Y}}\frac{1}{h} \leq 1 + \int_1^{\sqrt{Y}} \frac{dt}{t} = 1 + \frac{1}{2} \log Y,
\end{equation}
 with \eqref{applyharmonic} and \eqref{secondharmonic} together giving us that 
 $ \sum_{m=0}^{M-1} d(a + m A) $ $ \leq $ $ 2M \big( 1 + \frac{1}{2} \log Y \big) + 2 \sqrt{Y}$. 
 Moreover, if $\sqrt{Y} \leq M \log Y$, then (using the bound $Y \geq 3$) 
 we have that $2 M \big( 1 + \frac{1}{2} \log Y \big) \leq 3 M \log Y$ and $2 \sqrt{Y} \leq 2 M \log Y$, 
 and hence the desired bound in \eqref{withsqrtMlog}. 
\end{proof}

\begin{lemma}\label{taillemma}
 There exist absolute constants $C_{\operatorname{tail}} > 0$ and $X_0 \geq 3$ such that the following holds. For $X \geq X_0$, set 
 $\Lambda = \frac{\log X}{\log 2}$ and $k = \lfloor \Lambda^{1/10} \rfloor$
 and $L = \lfloor \Lambda^2 \rfloor$, and let $A, M \in \mathbb{N}$ and 	 $r \in \mathbb{N}_{0}$ and $Y \geq 3$, 
 and assume that 
\begin{align}
 Y & \leq 2^{L}, \label{Yleq2L} \\ 
 \sqrt{Y} & \leq M \log Y, \text{and} \label{sqrtYMlogY} \\ 
 r + (L-1) + (M-1) A & \leq Y. \label{longestassumption} 
\end{align}
 Also assume that: For a prime $p$, we have that 
\begin{equation}\label{ifpmidA}
 p \mid A \Longrightarrow p > L. 
\end{equation}
 Also assume that: For each prime divisor $p \mid A$, there is an integer $ j_p \in \{ 0, 1, \ldots, k-1 \} $ such that 
\begin{equation}\label{jpcongruence} 
 r + j_p \equiv 0 \pmod p. 
\end{equation}
 For 	 $m \in [0, M)$, set $ n_{m} = r + m A $ and $ T_{m, k} = \sum_{\ell \geq k} \frac{d(n_m + \ell)}{2^\ell}$. Then $$ \#\Big\{ m \in [0, 
 M) : T_{m, k} > 2^{-k/2} \Big\} \leq C_{\operatorname{tail}} M (\log Y) 2^{-k/2}.$$
\end{lemma}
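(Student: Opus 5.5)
Apply Markov's inequality to $\sum_{m=0}^{M-1} T_{m,k}$. By that inequality,
$$ \#\Big\{ m \in [0,M) : T_{m,k} > 2^{-k/2} \Big\} \leq 2^{k/2} \sum_{m=0}^{M-1} T_{m,k}, $$
so it suffices to show that $\sum_{m} T_{m,k} \leq C M (\log Y) 2^{-k}$ for an absolute constant $C$. I split each tail at $\ell = L$, writing $T_{m,k} = T^{(1)}_{m} + T^{(2)}_{m}$, where $T^{(1)}_m$ collects the terms with $k \leq \ell < L$ and $T^{(2)}_m$ the terms with $\ell \geq L$.

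\emph{The far tail.} For $T^{(2)}_m$, crude bounds are enough. Since $d(N) \leq 2\sqrt{N}$,
$$ T^{(2)}_m \leq \sum_{\ell \geq L} \frac{2\sqrt{n_m + \ell}}{2^{\ell}}. $$
Here $n_m \leq Y \leq 2^L$ by \eqref{longestassumption} and \eqref{Yleq2L}, so this is $O\big(2^{L/2} \cdot 2^{-L} + L\, 2^{-L}\big) = O(2^{-L/2})$. Because $L \approx \Lambda^2$ while $k \leq \Lambda^{1/10}$, for $X \geq X_0$ this gives $M \cdot O(2^{-L/2}) \leq M 2^{-k}$.

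\emph{The near tail.} For $T^{(1)}_m$, exchange the order of summation:
$$ \sum_m T^{(1)}_m = \sum_{k \leq \ell < L} 2^{-\ell} \sum_{m=0}^{M-1} d(r + \ell + mA). $$
I want to apply \eqref{withsqrtMlog} of Lemma \ref{basiclogbound} with $a = r + \ell$. The hypothesis $a + (M-1)A \leq Y$ holds by \eqref{longestassumption}, since $\ell \leq L-1$, and $\sqrt{Y} \leq M \log Y$ is \eqref{sqrtYMlogY}. The remaining hypothesis is $(r+\ell, A) = 1$, and verifying it is the crux.

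Suppose a prime $p \mid A$ divides $r + \ell$. By \eqref{jpcongruence}, $p \mid \ell - j_p$. Now $0 \leq j_p < k \leq \ell < L$, so $0 < \ell - j_p < L$. This contradicts $p > L$ from \eqref{ifpmidA}. Hence $(r+\ell, A) = 1$, and Lemma \ref{basiclogbound} gives $\sum_m d(r+\ell+mA) \leq 5 M \log Y$. Therefore
$$ \sum_m T^{(1)}_m \leq 5 M \log Y \sum_{\ell \geq k} 2^{-\ell} = 10 M (\log Y)\, 2^{-k}. $$
Note that this coprimality argument works only because $\ell \geq k > j_p$. This is exactly why the tail starts at $\ell = k$, and why the cutoff $L$ is needed so that $\ell - j_p < L$.

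\emph{Conclusion.} Combining the two parts,
$$ \sum_m T_{m,k} \leq 10 M (\log Y)\, 2^{-k} + M 2^{-k} \leq 11\, M (\log Y)\, 2^{-k}, $$
using $\log Y \geq 1$ since $Y \geq 3$. Multiplying by $2^{k/2}$ yields the claim with $C_{\operatorname{tail}} = 11$. The main obstacle is the coprimality check; the rest is choosing $X_0$ large enough that $2^{-L/2} \leq 2^{-k}$.
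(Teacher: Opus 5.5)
Your proposal is correct and follows essentially the same route as the paper. Both arguments bound $\sum_m T_{m,k}$ and apply Markov's inequality, split the tail at $\ell = L$, get $(r+\ell, A)=1$ from $p \mid (\ell - j_p)$ with $0 < \ell - j_p < L < p$ so that Lemma~\ref{basiclogbound} applies, and use $d(N) \leq 2\sqrt{N}$ on the range $\ell \geq L$; the only cosmetic difference is that you make the constant explicit ($C_{\operatorname{tail}} = 11$).
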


\begin{proof}
 Choose $X_0$ to be large enough so as to guarantee that: For all $X \geq X_0$, we have that $k \geq 1$ and that $L \geq 2$ and that 
\begin{equation}\label{Lhalfk} 
 \frac{L}{2} \geq k 
\end{equation} 
and that 
\begin{equation}\label{sqrtLpower} 
 \sqrt{L} + 1 \leq 2^{L/2}.
\end{equation}
 This is possible, from the given definitions for $k$ and $L$. Rewrite $\sum_{m=0}^{M-1} T_{m, k}$ so that 
\begin{align*}
 \sum_{m=0}^{M-1} T_{m, k}
 & = \sum_{m=0}^{M-1} \sum_{\ell \geq k} \frac{d(r + m A + \ell)}{2^{\ell}} \\
 & = \sum_{\ell \geq k} 2^{-\ell} \sum_{m=0}^{M-1} d(r + \ell + m A). 
\end{align*}
 Writing 
\begin{align}
 S_1 & = \sum_{\ell = k}^{L-1} 2^{-\ell} \sum_{m=0}^{M-1} d(r + \ell + m A) \ \text{and} \label{displayS1} \\ 
 S_{2} & = \sum_{\ell \geq L} 2^{-\ell} \sum_{m=0}^{M-1} d(r + \ell + m A), \label{displayS2}
\end{align}
 i.e., with $ \sum_{m=0}^{M-1} T_{m, k} = S_{1} + S_{2}$. We proceed to bound $S_{1}$. 

 Fix 
\begin{equation}\label{ellkL}
 \ell \in [k, L), 
\end{equation}
 i.e., so that $\ell$ is among the indices in the outer sum in \eqref{displayS1}. We claim that 
\begin{equation}\label{coprimerellA}
 (r + \ell, A) = 1. 
\end{equation} 
 By way of contradiction, suppose that there exists a prime $p$ such that $p \mid A$ and $p \mid (r + \ell)$. Since $p \mid A$, this 
 \emph{same} prime is such that that there exists $j_p \in [0, k)$ such that \eqref{jpcongruence} holds. From \eqref{jpcongruence} 
 together with the congruence $ r + \ell \equiv 0 \pmod p$, we have, as a consequence, that 
\begin{equation}\label{elljp0p} 
 \ell - j_p \equiv 0 \pmod p. 
\end{equation}
 Since $\ell \geq k$ and since $j_p \leq k -1$, we have that $\ell - j_p \geq 1$. Since $\ell < L$ and since $j_p \geq 0$, we have that 
 $\ell - j_p < L$. Consequently, 
 the bounds $1 \leq \ell - j_p < L$ hold. By assumption, each prime divisor $p$ of $A$ is such that $p 
 > L$, i.e., so that $1 \leq \ell - j_p < p$, but this contradicts (via \eqref{elljp0p})
 that $p$ divides $\ell - j_p$. 
 This proves the relation in \eqref{coprimerellA}. 

 Now, set $a = r + \ell$. Recall that $r \in \mathbb{N}_{0}$. Also recall that $k \geq 1$, with $\ell \geq k$. So, the bound $a \geq 1$ 
 holds. As above, we have shown that $(a, A) = 1$ holds, i.e., so that the condition displayed in \eqref{aA1} within Lemma 
 \ref{basiclogbound} holds. We again have that $\ell \leq L-1$, recalling that we are working under the assumption that $k$ is in the 
 index set indicated in \eqref{ellkL}. With regard to the condition in \eqref{aM1Ay} associated with Lemma \ref{basiclogbound}, 
 we find that 
\begin{align*}
 a + (M-1) A & = r + \ell + (M-1) A \\ 
 & \leq r + (L-1) + (M-1) A, 
\end{align*}
 so that the assumption in \eqref{longestassumption} gives us that $a + (M - 1) A \leq Y$ holds, so that all of the fundamental 
 conditions for Lemma \ref{basiclogbound} are satisfied. Moreover, from the assumption in \eqref{sqrtYMlogY}, Lemma \ref{basiclogbound} 
 gives us (via \eqref{withsqrtMlog}) that $$ \sum_{m=0}^{M-1} d(r + \ell + m A) \leq 5 M \log Y, $$ i.e., so that $ S_{1} \leq 5 M \log Y 
 \sum_{\ell = k}^{L-1} 2^{-\ell}$, which implies that 
\begin{equation}\label{S1final} 
 S_{1} \leq 10 M (\log Y) 2^{-k}. 
\end{equation}

 We proceed to bound $S_2$. In this case, we exploit the elementary bound
\begin{equation}\label{dsqrt}
 d(N) \leq 2 \sqrt{N}, 
\end{equation}
 which follows from a pairing argument given previously. Starting with the assumption in \eqref{longestassumption}, from the above 
 assumption that $L \geq 2$, we have that $ r + (M-1) A \leq Y$, so that 
\begin{equation}\label{implyboundY}
 m \in [0, M) \Longrightarrow r + m A \leq Y. 
\end{equation}
 We henceforth let $\ell \geq L$, i.e., so that $\ell$ is an index associated with the outer sum in \eqref{displayS2}. Again letting $m \in [0, 
 M)$, from \eqref{implyboundY}, we write $r + \ell + m A \leq Y + \ell$, so that the divisor bound in \eqref{dsqrt} gives us that $ d(r + 
 \ell + m A) \leq 2 \sqrt{Y + \ell}$, i.e., so that 
\begin{equation}\label{S2elementary}
 S_{2} \leq 2 M \sum_{\ell \geq L} 2^{-\ell} \sqrt{Y + \ell}. 
\end{equation}
 Using the inequality $\sqrt{Y + \ell} \leq \sqrt{Y} + \sqrt{\ell}$, we have that 
\begin{equation}\label{detailsforS2}
 \sum_{\ell \geq L} 2^{-\ell} \sqrt{Y + \ell}
 \leq \sqrt{Y} \sum_{\ell \geq L} 2^{-\ell} + \sum_{\ell \geq L} 2^{-\ell} \sqrt{\ell}, 
\end{equation}
 by evaluating the first series on the right of \eqref{detailsforS2} in closed form, and by using the bound on $Y$ in \eqref{Yleq2L}, we 
 find that 
\begin{equation}\label{toreindex}
 \sum_{\ell \geq L} 2^{-\ell} \sqrt{Y + \ell}
 \leq 2^{1 - L/2} + \sum_{\ell \geq L} 2^{-\ell} \sqrt{\ell}. 
\end{equation}
 We then apply a reindexing argument to the right-hand series in \eqref{toreindex}, writing
\begin{equation}\label{secondsqrtineq}
 \sum_{\ell \geq L} 2^{-\ell} \sqrt{\ell} = 
 2^{-L} \sum_{t \geq 0} 2^{-t} \sqrt{L + t}, 
\end{equation}
 and we again exploit an elementary inequality of the form $\sqrt{L + t} \leq \sqrt{L} + \sqrt{t}$, with \eqref{secondsqrtineq} giving us 
 that $ \sum_{t \geq 0} 2^{-t} \sqrt{L + t} \leq \sqrt{L} \sum_{t \geq 0} 2^{-t} + \sum_{t \geq 0} 2^{-t} \sqrt{t} 
 \leq C_0 \big( \sqrt{L} + 1 \big)$, 
 for an absolute constant $C_0$. 
 Exploiting the bound 
$\sqrt{L} + 1 \leq 2^{L/2}$ in \eqref{sqrtLpower}, we obtain that 
\begin{equation}\label{aftersqrtLpow}
 \sum_{\ell \geq L} 2^{-\ell} \sqrt{\ell} 
 \leq C_0 2^{-L} \big( \sqrt{L} + 1 \big) \leq C_{0} 2^{-L/2}. 
\end{equation}
 From \eqref{toreindex} and \eqref{aftersqrtLpow}, we find that 
 $ \sum_{\ell \geq L} 2^{-\ell} \sqrt{Y + \ell} \leq 2^{1 - L/2} + C_{0} 2^{-L/2}$, i.e., so that $ \sum_{\ell \geq L} 2^{-\ell} \sqrt{Y + \ell} 
 \leq C_{1} 2^{-L/2} $ for an absolute constant $C_{1}$. Exploiting the bound in \eqref{Lhalfk}, we obtain that $ \sum_{\ell \geq L} 
 2^{-\ell} \sqrt{Y + \ell} \leq C_{1} 2^{-k}$, i.e., so that 
\begin{equation}\label{S2bound} 
 S_{2} \leq 2 C_{1} M 2^{-k}. 
\end{equation}
 Since $Y \geq 3$, we obtain from \eqref{S2bound} that 
\begin{equation}\label{S2final} 
 S_2 \leq 2 C_1 M (\log Y) 2^{-k}. 
\end{equation}
 From the bounds for $S_1$ and $S_2$ in \eqref{S1final} and \eqref{S2final}, we find that there is an absolute constant $C_{2}$ such that 
\begin{equation}\label{finalsumT} 
 \sum_{m=0}^{M-1} T_{m, k} \leq C_{2} M (\log Y) 2^{-k}. 
\end{equation}
 We proceed to set
 $ \mathcal{B} = \big\{ 0 \leq m < M : T_{m, k} > 2^{-k/2} \big\}$. 
 For $m \in \mathcal{B}$, we have that $T_{m, k} > 2^{-k/2}$, so that 
\begin{equation}\label{implicitMarkov} 
 \sum_{m \in \mathcal{B}} T_{m, k} \geq \sum_{m \in \mathcal{B}} 2^{-k/2} = | \mathcal{B} | 2^{-k/2}. 
\end{equation}
 From \eqref{implicitMarkov}, we proceed to write
\begin{equation}\label{afterMarkov}
 |\mathcal{B}| 2^{-k/2} \leq \sum_{m \in \mathcal{B}} T_{m, k} \leq \sum_{m=0}^{M-1} T_{m, k}, 
\end{equation}
 so that \eqref{finalsumT} and \eqref{afterMarkov} together give us the desired bound, 
 writing $C_{\operatorname{tail}} = C_{2}$. 
\end{proof}

\begin{theorem}
 The block $11$ appears infinitely often in the base-$2$ expansion of $E$. 
\end{theorem}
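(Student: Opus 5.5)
The plan is to locate, for infinitely many $N$, the block $11$ at the two binary positions carrying the weights $2^{-(N-1)}$ and $2^{-N}$. Since $E=\sum_r d(r)2^{-r}$, these two digits are the last two bits of
$$ I_N=\lfloor 2^N E\rfloor=\sum_{r\le N} d(r)2^{N-r}+\Big\lfloor \sum_{\ell\ge1}\frac{d(N+\ell)}{2^\ell}\Big\rfloor , $$
and the block $11$ occurs there exactly when $I_N\equiv 3 \pmod 4$.

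Modulo $4$, only $d(N)$, $2d(N-1)$ and the floor term survive. Because $d(N)$ is odd only for squares, I will avoid squares altogether and aim for the following residues:
\begin{itemize}
\item $d(N-1)$ even and $d(N)\equiv 0\pmod 4$;
\item $d(N+1)=6$ times nothing more, i.e.\ $d(N+1)\equiv 6\pmod 8$, so that $d(N+1)/2\equiv 3\pmod 4$ is an integer;
\item $d(N+j)\equiv 0\pmod{2^{j+2}}$ for $2\le j<k$;
\item the remaining tail $\sum_{\ell\ge k} d(N+\ell)2^{-\ell}<1$.
\end{itemize}
Then the floor term is $\equiv 3\pmod 4$, and $I_N\equiv 0+0+3\equiv 3\pmod 4$.

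The divisibility conditions are imposed in Erd\H{o}s's style by the Chinese remainder theorem, using distinct primes all exceeding $L$:
\begin{itemize}
\item $N-1\equiv q_0\pmod{q_0^2}$, which makes $N-1$ a non-square;
\item $N\equiv q_1q_2\pmod{(q_1q_2)^2}$, which gives $4\mid d(N)$;
\item for each $2\le j<k$, $N+j\equiv (Q_j)^{1}\pmod{Q_j^{2}}$, where $Q_j$ is a product of $j+2$ fresh primes, so $2^{j+2}\mid d(N+j)$.
\end{itemize}
The one exact condition is at $N+1$. I would fix one more prime $q$ and require $N+1=q^2P$ with $P$ a prime not dividing any of the moduli. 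Then $d(N+1)=3\cdot 2=6$.

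Let $A'$ be the product of all the other prime-power moduli. The congruences become $P\equiv b\pmod{A'}$ for some $b$ coprime to $A'$. The admissible $N$ then have the form $N-1=r+mA$ with $A=q^2A'$ and $P=b+mA'$, $0\le m<M$, where $r$ is the least admissible value. Every prime $p\mid A$ divides some $r+j$ with $0\le j<k$, because the window starts at $r=N-1$ and has length $k$, and every such $p$ exceeds $L$. So Lemma~\ref{taillemma} applies after re-indexing by one (shifting $k$ by a constant does not matter). It shows that at most $C_{\operatorname{tail}}M(\log Y)2^{-k/2}$ values of $m$ have tail exceeding $2^{-k/2}$, and in particular at most that many have tail $\ge 1$.

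Next I count primes among the $P=b+mA'$ using Lemma~\ref{Alfordetal}. Take the primes in the moduli to be of size about $\Lambda^2$, so that $\log A'\ll k^2\log\Lambda$ and $A'\le X^{\delta}$. Take $Y\approx X$ and $M\approx X/A$, so that \eqref{Yleq2L} and \eqref{sqrtYMlogY} hold comfortably. The lemma then gives at least $\gg M/\log X\cdot A'/\varphi(A')\gg M/\log X$ good values of $m$. This beats the bad count $C M(\log X)2^{-k/2}$, since $2^{-k/2}=2^{-\lfloor\Lambda^{1/10}\rfloor/2}$ is much smaller than $(\log X)^{-2}$. Hence some $m$ gives a prime $P$ and a small tail, and so the block $11$ occurs at position $N$. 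Letting $X\to\infty$ produces infinitely many such $N$.

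I expect the main obstacle to be the exceptional moduli in Lemma~\ref{Alfordetal}. The modulus $A'$ must not be a multiple of any of the at most $\overline{\mathcal{D}}$ exceptional integers in $\mathcal{D}(N)$. My first attempt would be to draw the CRT primes from a pool slightly larger than needed, for instance $\overline{\mathcal{D}}+1$ disjoint candidate families. Each exceptional integer exceeds $\log X$, but I would need to argue that it can be a divisor of at most one family's product, or else simply pick a family whose product avoids being a multiple of each exceptional element. One such family must then work.

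A secondary bookkeeping point is ensuring $P\nmid A$ and $P>L$, so that $d(q^2P)$ is exactly $6$, and that $q\nmid P$. Both follow once $P$ is large, since all moduli primes are at most about $2\Lambda^2$.
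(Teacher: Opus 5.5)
Your proposal is correct and is essentially the paper's argument shifted by one: your $N$ is the paper's $n+1$, and your conditions (one prime exactly dividing $N-1$, two at $N$, $N+1=q^2P$ with $P$ prime, $j+2$ primes at $N+j$) are exactly the paper's, with the Alford--Granville--Pomerance count, Lemma~\ref{taillemma}, and the final counting step used the same way, and your $\lfloor 2^N E\rfloor \bmod 4$ bookkeeping is equivalent to the paper's fractional part of $2^{n-1}E$ lying in $[3/4,1)$. The one genuine variation is your handling of the exceptional moduli via $\overline{\mathcal{D}}+1$ pairwise disjoint prime families; this works at once (an exceptional $D>\log X>1$ cannot divide two coprime moduli) and is a simpler alternative to the paper's removal of $\mathcal{P}_{\mathrm{bad}}$, though you should still (i) note that your window $N-1,\dots,N+k-1$ has length $k+1$, so either rerun Lemma~\ref{taillemma} with $k+1$ or stop the $Q_j$ at $j<k-1$, (ii) check that $b$ is coprime to $A'$ (each prime of $A'$ divides some $N+i$ with $i\neq 1$ and $|i-1|<L$), and (iii) discard the at most one $m$ with $P=q$.
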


\begin{proof}
 Let $X$ be a real number (that is sufficiently large for the purposes of certain applications below). 
 Our strategy, at a basic level, is to construct
 an integer $n = n(X)$ such that 
 the $n^{\text{th}}$ and $(n+1)^{\text{th}}$ binary digits after the binary point of $E$ are both $1$. 
 We then let $X \to \infty$, and we formalize how this is permissible. 

 Set $\Lambda = \frac{\log X}{\log 2}$ and $k = \lfloor \Lambda^{1/10} \rfloor$ and $L = \lfloor \Lambda^{2} \rfloor$.
 For sufficiently large values of $X$, we have that $k \geq 3$. 

 We apply the formulation of the Alford--Granville--Pomerance lemma \cite{AlfordGranvillePomerance1994} (cf.\ \cite{Vandehey2013}) 
 given in Lemma \ref{Alfordetal}. Being consistent with the notation in Lemma \ref{Alfordetal}, we set $\delta = \frac{1}{4}$, and we set 
 $N = X$, i.e., with the assumption that $X$ is sufficiently large so that $X > N_0$. Again adopting the notation from 
 Lemma \ref{Alfordetal}, we write $\mathcal{D}(N) = \mathcal{D}(X)$
 to denote the exceptional set described in this lemma. So, by the AGP lemma, the inequality 
\begin{equation}\label{repeatAGP}
 \pi\big( X; d, a \big) \geq \frac{X}{2 \varphi(d) \log X} 
\end{equation}
 holds for all moduli $d \leq N^{1/4} = X^{1/4}$ not divisible by any exceptional element of $\mathcal{D}(N) = \mathcal{D}(X)$, 
 and \eqref{repeatAGP} also holds for $a$ coprime to $d$. 
 Our strategy, at this point, is to construct a modulus $B$ to which the 
 AGP lemma, with our specified values, is to be applied. 

 Define 
\begin{multline*}
 \mathcal{P}_{\operatorname{bad}}
 = \\ \Big\{ p \in (L, 2L) : \text{$p$ is prime and $p \mid D$ for some $D \in \mathcal{D}(X)$ with $D \leq X^{1/4}$} \Big\}. 
\end{multline*}
 We aim to estimate the size of $\mathcal{P}_{\operatorname{bad}}$. In this direction, 
 for a positive integer $D$, we set 
 $$ \omega_{(L, 2L)}(D) = \#\big\{ p \in (L, 2L) : \text{$p$ is prime and $p \mid D$} \big\}. $$
 If $\omega_{(L, 2L)}(D) = r$ for some integer $r$, then $D$ is divisible by the product
 of $r$ distinct primes, each exceeding $L$, i.e., so that 
 $ D \geq L^{r}$, so that 
 $ r \leq \frac{\log D}{\log L}$. 
 So, if $D \leq X^{1/4}$, then 
 $ \omega_{(L, 2L)}(D) \leq \frac{\log X}{4 \log L}$. 
 Now, by the 
 Alford--Granville--Pomerance lemma 
 (again adopting notation from Lemma \ref{Alfordetal}), 
 we have that the number of elements in $\mathcal{D}(N)$ is at most $\overline{\mathcal{D}} = \overline{\mathcal{D}}(\delta)$
 (depending only on $\delta = \frac{1}{4}$). We see that 
\begin{align*}
 |\mathcal{P}_{\operatorname{bad}}|
 & \leq \sum_{\substack{ D \in \mathcal{D}(X) \\ D \leq X^{1/4} }} \omega_{(L, 2L)}(D) \\ 
 & \leq \overline{\mathcal{D}} \frac{\log X}{4 \log L}. 
\end{align*}
 Since we have set $\delta = \frac{1}{4}$, and since $\overline{\mathcal{D}} = \overline{\mathcal{D}}(\delta)$ depends only on $\delta$, 
 we have that 
 $\overline{\mathcal{D}}$ is fixed, so that $ \left| \mathcal{P}_{\operatorname{bad}} \right| \ll \frac{\log X}{\log L}$. 

 Let 
\begin{equation}\label{mathcalAX}
 \mathcal{A}_{X} = \{ p \in (L, 2L) : \text{$p$ is prime and $p \not\in \mathcal{P}_{\operatorname{bad}}$} \}.
\end{equation}
 Our construction requires that $\mathcal{A}_{X}$ contain at least
\begin{equation}\label{closedsubstack} 
 1 + \sum_{\substack{ 0 \leq j < k \\ j \neq 2 }} (j+1)
 = \frac{k(k+1)}{2} - 2. 
\end{equation}
 For $k \geq 3$, the right-hand side of 
 \eqref{closedsubstack} is $\leq k^2$. 
 By the prime number theorem, there exists an absolute constant $c_{\pi} > 0$ such that:
 For all sufficiently large $L$, the bound
\begin{equation}\label{pifloor}
 \pi\left( \lfloor 2L \rfloor - 1 \right) - \pi(L) \geq c_{\pi} \frac{L}{\log L} 
\end{equation}
 holds. The definition of $\mathcal{A}_{X}$ 
 gives us, in a direct way, that the cardinality of \eqref{mathcalAX} satisfies 
\begin{equation}\label{cardinalityrel} 
 \left| \mathcal{A}_{X} \right| = 
 \pi\left( \lfloor 2L \rfloor - 1 \right) - \pi(L) - \left| \mathcal{P}_{\operatorname{bad}} \right|. 
\end{equation}
 Applying \eqref{pifloor} and the upper bound for $| \mathcal{P}_{\operatorname{bad}} |$ established
 above to the cardinality relation in \eqref{cardinalityrel}, we find that 
\begin{equation}\label{boundcardinal} 
 \left| \mathcal{A}_{X} \right| \geq c_{\pi} \frac{L}{\log L} - \overline{\mathcal{D}} \frac{\log X}{4 \log L}. 
\end{equation}
 Recall that $\Lambda = \frac{\log X}{\log 2}$ and that $L = \lfloor \Lambda^{2} \rfloor$. 
 So, for all sufficiently large $X$, we have that 
\begin{equation}\label{Lquadratically} 
 L \geq \frac{1}{2} \Lambda^{2} = \frac{ (\log X)^{2} }{2 (\log 2)^2}. 
\end{equation}
 Since $L$ grows quadratically in $\log X$ according to \eqref{Lquadratically}, we can conclude
 that: For sufficiently large $X$, the relation 
\begin{equation}\label{dividelog} 
 \overline{\mathcal{D}} \frac{\log X}{4} \leq \frac{c_{\pi}}{2} L 
\end{equation}
 holds. From \eqref{boundcardinal} and \eqref{dividelog} together, we may deduce that 
\begin{equation}\label{boundAlikepi}
 \left| \mathcal{A}_{X} \right| \geq \frac{c_{\pi}}{2} \frac{L}{\log L}. 
\end{equation}
 Since $k = \lfloor \Lambda^{1/10} \rfloor$, we have (for sufficiently large $X$) that 
 $ k^2 \leq \Lambda^{1/5}$. 
 In a similar spirit, since 
 $L = \lfloor \Lambda^{2} \rfloor$, we have that 
 $ \frac{L}{\log L} \asymp \frac{\Lambda^2}{\log \Lambda}$. 
 Consequently, we obtain the relations
 $ \frac{L/\log L}{k^2} \gg \frac{ \Lambda^2 / \log \Lambda }{\Lambda^{1/5}} = 
 \frac{\Lambda^{9/5}}{\log \Lambda} \to \infty $
 as $X \to \infty$. 
 So, for sufficiently large $X$, the relation 
\begin{equation}\label{lowerksquare}
 \frac{c_{\pi}}{2} \frac{L}{\log L} \geq k^2 
\end{equation}
 holds. From \eqref{boundAlikepi} and \eqref{lowerksquare}, since $ |\mathcal{A}_{X}| \geq k^2$, 
 we can deduce that: For sufficiently 
 large $X$, the number of primes in the set $(L, 2L) \setminus \mathcal{P}_{\operatorname{bad}}$
 is greater than or equal to the left-hand side of \eqref{closedsubstack}. 

 From the argument in the preceding paragraph, we are permitted, for sufficiently large $X$, 
 to choose distinct primes $q_0$
 and $p_{j, t}$ for $0 \leq j < k$ and $j \neq 2$ and $1 \leq t \leq j +1$
 from the set $(L, 2L) \setminus \mathcal{P}_{\operatorname{bad}}$. 
 For $j \neq 2$, define
\begin{equation}\label{Pprodprime}
 P_j = \prod_{t=1}^{j+1} p_{j, t}. 
\end{equation}
 Also define
\begin{equation}\label{Aprodprime} 
 A = q_0^3 \prod_{\substack{ 0 \leq j < k \\ j \neq 2 }} P_{j}^2 
\end{equation}
 and 
\begin{equation}\label{Bprodprime}
 B = \frac{A}{q_0^2} = q_0 \prod_{\substack{ 0 \leq j < k \\ j \neq 2 }} P_j^2. 
\end{equation}

 We claim that 
\begin{equation}\label{Broot} 
 B \leq X^{1/4}
\end{equation}
 for sufficiently large $X$. Each chosen prime is $ < 2 L$, 
 and the total number of prime factors of $B$, counted with multiplicity, is $O(k^2)$. So, we find that 
 $ \log B = O(k^2 \log L) = O( \Lambda^{1/5} \log \Lambda ) = o(\log X)$, 
 and hence \eqref{Broot} holding for sufficiently large $X$. 

 By way of contradiction, suppose that there exists
 an element $D \in \mathcal{D}(X)$ that divides $B$. From \eqref{Broot}, we find that 
\begin{equation}\label{DBXquarter} 
 D \leq B \leq X^{1/4} 
\end{equation}
 for all sufficiently large $X$. Since each member of $\mathcal{D}(X)$ exceeds $\log X$, 
 we have, for sufficiently large $X$, that $D > 1$. 
 So, the integer $D$ has at least one prime divisor. From the assumption that $D \mid B$, 
 every prime that divides $D$ also divides $B$, and thus is among the chosen primes from $(L, 2L)$. 
 We thus choose a prime divisor $p$ of $D$, but, since $D \in \mathcal{D}(X)$
 and since \eqref{DBXquarter} gives us that $D \leq X^{1/4}$, 
 we have that $p \in \mathcal{P}_{\operatorname{bad}}$. 
 This contradicts that the chosen primes were chosen outside of $\mathcal{P}_{\operatorname{bad}}$.
 So, we have that no element in $\mathcal{D}(X)$ divides $B$. 

 Now, recall the formulation of the CRT given above. 
 Our construction gives us, in a direct way, 
 that expressions of the forms $q_0^3$ and $P_j^2$ for $0 \leq j < k$ and $j \neq 2$
 are pairwise coprime. 
 So, the CRT gives us that there is a residue $r$ simultaneously satisfying 
\begin{equation}\label{firstequivCRT} 
 r \equiv q_0^2 - 2 \pmod {q_{0}^{3}} 
\end{equation}
 and 
\begin{equation}\label{secondequivCRT} 
 r \equiv P_j - j \pmod {P_{j}^{2}} 
\end{equation}
 for $0 \leq j < k$ and $ j \neq 2$. Moreover, 
 the above residue $r$ is unique modulo the product of the moduli, which is precisely $A$ as defined in 
 \eqref{Aprodprime}. We then fix $r$ as the unique residue class such that 
\begin{equation}\label{rbounds} 
 0 \leq r < A.
\end{equation}
 From \eqref{firstequivCRT}, we find that 
 $r+2$ is divisible by $q_0^2$, and this leads us to define 
\begin{equation}\label{defines} 
 s = \frac{r+2}{q_0^2}. 
\end{equation}
 Since $0 \leq r < A = q_0^2 B$, we deduce that 
\begin{equation}\label{rplus2bound} 
 0 < r + 2 \leq q_0^2 B + 1. 
\end{equation}
 Since $q_0^2 \mid (r+2)$, this and the definition of $s$ in \eqref{defines}
 and the inequalities in \eqref{rplus2bound} gives us that 
\begin{equation}\label{sbounds} 
 1 \leq s \leq B. 
\end{equation}
 From \eqref{firstequivCRT} and the definition of $s$, 
 we see that 
 $ q_0^2 s \equiv q_0^2 \pmod {q_{0}^{3}}$. 
 Since $q_0^3 \mid q_0^2 (s-1)$, 
 it follows that $q_0 \mid (s-1)$, 
 i.e., so that 
\begin{equation}\label{s1modq} 
 s \equiv 1 \pmod {{q}_{0}}.
\end{equation}
 Recalling \eqref{Bprodprime}, we see that $q_0 \mid B$. From this divisibility relation
 and the congruence in \eqref{s1modq}, we deduce that it is not the case that $s= B$. 
 We thus may refine \eqref{sbounds}, writing 
\begin{equation}\label{stricts} 
 1 \leq s < B. 
\end{equation} 

 We claim that 
\begin{equation}\label{sB1} 
 (s, B) = 1. 
\end{equation}
 To begin with, the relation in \eqref{s1modq}
 allows us to deduce that $q_0 \nmid s$. 
 Now, write $p$ in place of one of the other prime divisors of $B$. 
 Observe that $p \mid P_j$, i.e., for some index $j \in [0, k) \setminus \{ 2 \}$
 associated with the product in \eqref{Bprodprime} (recalling the condition such that $j \neq 2$). 
 Moreover, the congruence in 
 \eqref{secondequivCRT} allows us to write 
 $r + j = P_j^2 z + P_j $ for some integer $z$, 
 so that 
 $ r + j \equiv 0 \bmod p$, 
 so that 
 $ r + 2 \equiv 2 - j \bmod p$, 
 i.e., so that 
\begin{equation}\label{q0squareds} 
 q_0^2 s \equiv 2 - j \pmod p. 
\end{equation}
 By way of contradiction, suppose that $p \mid s$. With this assumption, 
 the equivalence in 
 \eqref{q0squareds} allows us to deduce that 
\begin{equation}\label{2minusjequiv}
 2 - j \equiv 0 \pmod p. 
\end{equation}
 Since $j \neq 2$ and $2 \geq 2 - j > 2 -k$ and $p > L$ (recalling that $p \in (L, 2L)$ by construction)
 and $L > k$ (recalling that $k = \lfloor \Lambda^{1/10} \rfloor$ and $L = \lfloor \Lambda^2 \rfloor$), 
 we find that $0 < |2 - j | < p$, contradicting (via \eqref{2minusjequiv})
 that $p \mid (2-j)$. So, the relation $p \nmid s$ holds, so that 
 \eqref{sB1} holds, as desired. 

 We proceed to apply the AGP estimate in Lemma \ref{Alfordetal} and in \eqref{repeatAGP}, with $d = B$ and $a = s$. We are permitted 
 to apply Lemma \ref{Alfordetal}, since: 

\vspace{0.1in}

\noindent $\bullet$ $B \leq X^{1/4}$ by \eqref{Broot}; 

\vspace{0.1in}

\noindent $\bullet$ $B$ is not divisible by any element of $\mathcal{D}(X)$, from the preceding paragraph; and 

\vspace{0.1in}

\noindent $\bullet$ $s$ and $B$ are coprime, as established in the preceding paragraph. 

\vspace{0.1in}

\noindent So, since the required conditions of Lemma \ref{Alfordetal} are satisfied, we find that 
\begin{equation}\label{applyAlford}
 \pi(X; B, s) \geq \frac{X}{ 2 \varphi(B) \log X }, 
\end{equation}
 recalling that we are writing $X$ in place of $N$. Since $\varphi(B) \leq B$, we obtain 
 from \eqref{applyAlford} that 
\begin{equation}\label{piX2BlogX} 
 \pi(X; B, s) \geq \frac{X}{ 2 B \log X }. 
\end{equation}

 Now, define 
\begin{equation}\label{definecapM}
 M_{X, B} = M = \left\lfloor \frac{X}{B} \right\rfloor + 1. 
\end{equation}
 For $M$ as defined in \eqref{definecapM}, and for arbitrary $m \in [0, M)$, 
 we then define 
\begin{equation}\label{nmwithoutp} 
 n_m = r + m A. 
\end{equation}
 Again for $M$ as in \eqref{definecapM}, we also define 
\begin{equation}\label{mathcalG}
 \mathcal{G}_{X, M} = \mathcal{G}_{X} = \{ 0 \leq m < M : s + m B \leq X, \ s + m B \ \text{prime} \}. 
\end{equation}

 Since $1 \leq s < B$ (recalling \eqref{stricts}), the map $m \mapsto s + m B$ provides a bijection from $\mathcal{G}_{X}$ onto the set of 
 primes $p \leq X$ satisfying $p \equiv s \bmod B$. Indeed,
 if $p \leq X $ and $p \equiv s \bmod B$, then $p = s + m B$ for a unique integer $m$.
 Since $p > 0$ and since $1 \leq s < B$ (again recalling \eqref{stricts}),
 the $m < 0$ case would be impossible, because, otherwise, 
 we would have that $p = s + m B \leq s - B < 0$. 
 So, we obtain the nonnegativity of $m \geq 0$.
 Since $p \leq X$ and $s \geq 1$, we find that
 $m B = p - s < X$, i.e., so that 
 $0 \leq m < \frac{X}{B} < M$, with $m \in \mathcal{G}_{X}$. 
 Conversely, each element $m \in \mathcal{G}_{X}$ gives rise to a prime of the form $p = s + m B$, 
 and hence the equality $|\mathcal{G}_{X}| = \pi(X;B,s)$. 
 
 Recalling (via \eqref{Broot}) that $B \leq X^{1/4}$, we find that $\frac{X}{B} \to \infty$, and we can deduce that: For sufficiently large $X$, 
 the relation $ M \leq \frac{2X}{B} $ holds. From \eqref{piX2BlogX}, we have, as a consequence, that 
\begin{equation}\label{Mover4logX}
 |\mathcal{G}_{X}| = \pi(X; B, s) \geq \frac{M}{4 \log X} 
\end{equation}
 holds for sufficiently large $X$. 
 
 For $m \in \mathcal{G}_{X}$, set $ p = s + m B$. Then $p$ is a prime and $p \leq X$. Moreover, the definition in \eqref{nmwithoutp} 
 gives us that 
\begin{equation}\label{nmplus2} 
 n_m + 2 = r + 2 + m A. 
\end{equation}
 The definition of $s$ in \eqref{defines} gives us that 
 $ r + 2 = q_0^2 s$, 
 and the definition of $B$ in \eqref{Bprodprime}
 gives us that $ A = q_0^2 B$. 
 So, the equality in \eqref{nmplus2} then gives us that 
\begin{equation}\label{nm2factor} 
 n_m + 2 = q_0^2 s + m q_0^2 B = q_0^2 (s + m B) = q_0^2 p. 
\end{equation}
 From \eqref{Bprodprime}, we obtain the divisibility relation 
 $q_0 \mid B$, so that 
 $ p = s + m B \equiv s \bmod {{q}_{0}}$. 
 From the congruence in \eqref{s1modq}, we then find that 
\begin{equation}\label{pequiv1modq} 
 p \equiv 1 \pmod {{q}_{0}}, 
\end{equation}
 and we can see from \eqref{pequiv1modq}
 that $p \neq q_0$. 
 So, from \eqref{nm2factor}, we see that 
\begin{equation}\label{maind6} 
 d(n_m +2) = d(q_0^2 p) = 6. 
\end{equation}
 So, a combined application of \eqref{Mover4logX} and \eqref{maind6} gives us that 
\begin{equation}\label{GpiMX} 
 \#\big\{ m \in [0, M) : d(n_m + 2) = 6 \big\} 
 \geq |\mathcal{G}_{X}| =\pi(X;B,s) \geq \frac{M}{4 \log X}. 
\end{equation}

 Now, we proceed to apply the tail estimate given in Lemma \ref{taillemma}, letting 
\begin{equation}\label{Yfortail}
 Y = 2 q_0^2 X.
\end{equation}
 We demonstrate, as below, that the required conditions
 in Lemma \ref{taillemma} are all satisfied. 

 Recall that $q_0$ is chosen from the set $ (L, 2L) \setminus \mathcal{P}_{\operatorname{bad}}$, 
 i.e., so that $q_0 < 2 L$. This together with \eqref{Yfortail}
 give us that 
\begin{equation}\label{applylog2}
 Y \leq 8 L^2 X. 
\end{equation} 
 We let $\log_2$ denote the base-2 logarithm. Applying this to both sides of 
 \eqref{applylog2}, and using the definition of $\Lambda = \log_2 X$, 
 we find that 
\begin{equation}\label{boundlog2Y}
 \log_2 Y \leq \Lambda + O(\log L). 
\end{equation}
 Since $L = \lfloor \Lambda^2 \rfloor$, applying this in conjunction with \eqref{boundlog2Y} allows us to deduce that: For sufficiently 
 large $X$, the bound $ \log_2 Y \leq L $ holds, i.e., so that $Y \leq 2^L$, giving us that the condition in \eqref{Yleq2L} holds for 
 sufficiently large $X$. 

 Recall (from \eqref{Broot}) that $B \leq X^{1/4}$. This gives us that 
\begin{equation}\label{MXthreequarters}
 M = \left\lfloor \frac{X}{B} \right\rfloor + 1 \geq \frac{X}{B} \geq X^{3/4}. 
\end{equation}
 From the definition in \eqref{Yfortail} together with the bound $q_0 < 2 L$, we find that 
 $ \sqrt{Y} \leq 2 \sqrt{2} L X^{1/2}$. 
 Since $Y = 2 q_0^2 X \geq X$, we find that $\log Y \geq \log X$. So, we obtain the lower bound
\begin{equation}\label{MlogYoversqrtY}
 \frac{M \log Y}{\sqrt{Y}} \geq \frac{X^{3/4} \log X}{2 \sqrt{2} L X^{1/2}} = \frac{1}{2\sqrt{2}} \frac{X^{1/4} \log X}{L}. 
\end{equation}
 Since $L = \lfloor \Lambda^{2} \rfloor$ and $\Lambda = \frac{\log X}{\log 2}$, we find that 
\begin{equation}\label{reciprocalL} 
 \frac{1}{L} \geq \frac{ (\log 2)^{2} }{ (\log X)^2 }. 
\end{equation}
 As a consequence of both \eqref{MlogYoversqrtY} and \eqref{reciprocalL}, we find that 
\begin{equation}\label{betterMlogY} 
 \frac{M \log Y}{\sqrt{Y}} \geq \frac{\log^2 2}{2\sqrt{2}} \frac{X^{1/4}}{\log X}. 
\end{equation}
 Since the right-hand side of \eqref{betterMlogY} tends to infinity as $X \to \infty$, we can conclude that $\frac{M \log Y}{\sqrt{Y}} \geq 
 1$ for all sufficiently large $X$, i.e., so that $\sqrt{Y} \leq M \log Y$ for all sufficiently large $X$, i.e., 
 so that the desired condition in \eqref{sqrtYMlogY} within Lemma \ref{taillemma} holds. 

 Recalling the bounds on $r$ in \eqref{rbounds}, and recalling the relation $A = q_0^2 B$  given in \eqref{Bprodprime},  we find that 
\begin{equation}\label{reboundr} 
 0 \leq r < q_0^2 B.
\end{equation}
 Also, recalling the definition of $M$ on display in \eqref{definecapM}, 
 we find that 
\begin{equation}\label{boundMminus1}
 M - 1 \leq \frac{X}{B}. 
\end{equation}
 A combined application of \eqref{reboundr} and \eqref{boundMminus1} then gives us that 
\begin{equation}\label{withouto} 
 r + (L-1) + (M-1) A \leq q_0^2 B + L + q_0^2 X. 
\end{equation}
 Recalling, from \eqref{Broot}, that $B \leq X^{1/4}$, and recalling that $L \asymp (\log X)^{2}$
 (since $L = \lfloor \Lambda^{2} \rfloor$ and $\Lambda = \frac{\log X}{\log 2}$), we have that 
\begin{equation}\label{witho} 
 q_0^2 B + L = o\big( q_0^2 X \big). 
\end{equation}
 A combined application of \eqref{Yfortail}, \eqref{withouto}, and \eqref{witho} then 
  gives us that: For sufficiently large $X$, the bound
 $ r + (L-1) + (M-1) A \leq 2 q_0^2 X = Y $
 holds. So, the desired condition in \eqref{longestassumption} within 
 Lemma \ref{taillemma} holds. 
 
 Now, every prime divisor of $A$ is a prime chosen in $(L, 2L)$, i.e., so that if $p \mid A$, then $p > L$. So, the desired implication 
 in \eqref{ifpmidA} holds. 

 Now, for a prime divisor $p \mid A$, we want to determine a value
 $ j_p \in \{ 0, 1, \ldots, k - 1 \}$, 
 such that 
\begin{equation}\label{rjp0p} 
 r + j_p \equiv 0 \pmod p, 
\end{equation}
 noting that we are working under the assumption that $k \geq 3$. Recall that $q_0^2 \mid (r+2)$, as established above, in our 
 construction of the value $s$ defined in \eqref{defines}. So, we find that the prime $q_0$ divides $r+2$. So, if $p$ happens to be equal 
 to $q_0$, then, in this case, we set $j_p = j_{q_0} = 2$, i.e., so that the desired congruence relation in \eqref{rjp0p} holds. Now, recall the 
 consequence of the CRT in \eqref{secondequivCRT}. We proceed to rewrite the congruence in \eqref{secondequivCRT} so that $ r + j 
 \equiv P_j \pmod {P_{j}^{2}}$. Since $p_{j, t}$ divides $P_j$ (and $P_j^2$), and since ${P_{j}^{2}}$ divides $ r + j - P_j$, we have that 
\begin{equation}\label{pjtmid} 
 p_{j, t} \mid (r + j - P_j).
\end{equation} 
 Again since $p_{j, t}$ divides $P_j$, we deduce from \eqref{pjtmid} that $p_{j, t}$ divides $r + j$. So, if $p = p_{j, t}$, then we let $j_p = 
 j$, so that \eqref{rjp0p} is satisfied. 
 So, we have that the final condition of Lemma \ref{taillemma} holds, in reference
 to the condition involving \eqref{jpcongruence}. 

 So, from our above construction, since all of the conditions of Lemma \ref{taillemma} are satisfied, we obtain from Lemma 
 \ref{taillemma} that: There exists an absolute constant $C_{\operatorname{tail}} > 0$ such that: For all sufficiently large $X$, 
 the inequality 
\begin{equation}\label{bigenumerate} 
 \#\left\{ m \in [0, M) : \sum_{\ell \geq k} \frac{d(n_m + \ell)}{2^\ell} > 2^{-k/2} \right\} 
 \leq C_{\operatorname{tail}} M (\log Y) 2^{-k/2} 
\end{equation}
 holds. 

 From the definition of $Y$ in \eqref{Yfortail}, we see that $ \log Y = O(\log X)$. Moreover, since $k = \lfloor \Lambda^{1/10} \rfloor$, 
 we have that $\Lambda^{1/10} - 1 < k \leq \Lambda^{1/10}$, so that $\frac{1}{2} \Lambda^{1/10} - \frac{1}{2} < \frac{k}{2} \leq 
 \frac{1}{2} \Lambda^{1/10}$, so that $ \frac{k}{2} = \frac{1}{2} \Lambda^{1/10} + O(1)$, so that, for any positive power $\alpha$, 
 we obtain  that 
\begin{align*}
 2^{k/2} & = 2^{\frac{1}{2} \Lambda^{1/10} + O(1)} \\ 
 & = \exp\left( \left( \log 2 \right) \left( \frac{1}{2} \Lambda^{1/10} + O(1) \right) \right) \\ 
 & = \exp\left( \frac{\log 2}{2} \Lambda^{1/10} + O(1) \right) \\ 
 & \gg \left( \log X \right)^{\alpha}. 
\end{align*}
 So, since $2^{-k/2} \ll \frac{1}{(\log X)^{\alpha}}$, we find (setting $\alpha =3$) that $ C_{\operatorname{tail}} M (\log Y) 2^{-k/2} = 
 o\left( \frac{M}{\log X} \right)$. So, for all sufficiently large $X$, the inequality 
\begin{equation}\label{upperM4logX} 
 C_{\operatorname{tail}} M (\log Y) 2^{-k/2} < \frac{M}{4 \log X} 
\end{equation}
 holds. From \eqref{GpiMX}, 
 there are \emph{at least} $\frac{M}{4 \log X}$ integers $m \in [0, M)$
 such that $d(n_m + 2) = 6$, 
 From \eqref{bigenumerate} and \eqref{upperM4logX}, 
 there are \emph{strictly fewer} than $\frac{M}{4 \log X}$ integers $m \in [0, M)$ 
 satisfying 
 $ \sum_{\ell \geq k} \frac{d(n_m + \ell)}{2^\ell} > 2^{-k/2}$. 
 So, using a ``pigeonhole-like'' counting argument, there is \emph{at least one} integer $m \in [0, M)$ such that both 
\begin{equation}\label{clarityrewrite1}
 d(n_m + 2) = 6 
\end{equation}
 and 
\begin{equation}\label{clarityrewrite2} 
 \sum_{\ell \geq k} \frac{d(n_m + \ell)}{2^\ell} \leq 2^{-k/2} 
\end{equation}
 hold. We fix such an integer $m$, and write $n = n_m$. Writing $n = n_m$, the relations in \eqref{clarityrewrite1} and 
 \eqref{clarityrewrite2} become 
\begin{equation}\label{dnplus2} 
 d(n + 2) = 6. 
\end{equation}
 and 
\begin{equation}\label{simplifytail} 
 \sum_{\ell \geq k} \frac{d(n + \ell)}{2^{\ell}} \leq 2^{-k/2}. 
\end{equation}

 Let $j \neq 2$, with $j \in [0, k)$. Recalling the definition in \eqref{Aprodprime}, we have that $P_j^2$ divides $A$. So, since $n+j = r + 
 m A +j$, we have that 
\begin{equation}\label{nplusjequiv} 
 n + j \equiv (r+j) \pmod {P_j^2}. 
\end{equation}
 Our application of the CRT in \eqref{secondequivCRT} then gives us, in conjunction with 
 \eqref{nplusjequiv}, that 
\begin{equation}\label{Pjmodsquare} 
 n + j \equiv P_j \pmod {P_{j}^{2}}. 
\end{equation} 
 Recalling the definition in \eqref{Pprodprime}, we find that: For $t = 1, 2, \ldots, j+1$, the prime $p_{j, t}$ divides $P_j$, but $p_{j, t}^2 
 \nmid P_j$. From \eqref{Pjmodsquare}, we have that 
\begin{equation}\label{capPjsquaredmid}
 P_j^2 \mid (n + j - P_j). 
\end{equation}
 So, since $p_{j, t} \mid P_j^2$ and $p_{j, t} \mid P_j$, we have that $p_{j, t} \mid (n + j)$. We also claim that $p_{j, t}^{2} \nmid (n+j)$, 
 and we proceed by contradiction,  assuming instead that 
\begin{equation}\label{contradictpjt} 
 p_{j, t}^{2} \mid (n+j).
\end{equation} 
 From \eqref{capPjsquaredmid}, again since $p_{j, t}^2 \mid P_{j}^2$, we have that 
\begin{equation}\label{usetocontradict} 
 p_{j, t}^2 \mid (n + j - P_j). 
\end{equation}
 From \eqref{contradictpjt} and \eqref{usetocontradict}, these together would imply that $p_{j, t}^{2} \mid P_j$, which does not hold. 
 So, we have shown that $p_{j, t} \mid (n + j)$ and that $p_{j, t}^{2} \nmid (n + j)$ for fixed $j$ and for the indices $t \in [1, j+1]$ 
 associated with the product in \eqref{Pprodprime}. So, the integer $n+j$ has at least $j+1$ distinct prime divisors that each have an 
 exponent exactly equal to $1$ in the prime factorization of $n + j$. Therefore, the relation 
\begin{equation}\label{2jplus1mid} 
 2^{j+1} \mid d(n+j) 
\end{equation}
 holds for all $j \in [0, k) \setminus \{2 \}$.

 Now, we rewrite the latter series expansion for the Erd\H{o}s--Borwein constant in \eqref{displayE} so that 
\begin{equation}\label{rewriteE}
 E = \sum_{1 \leq t < n} \frac{d(t)}{2^t} + \sum_{\substack{0\leq j < k \\ j \neq 2}} \frac{d(n+j)}{2^{n+j}} + \frac{d(n+2)}{2^{n + 
 2}} + \sum_{\ell \geq k} \frac{d(n +\ell)}{2^{n + \ell}}. 
\end{equation}

 Now, write 
\begin{equation}\label{mathcalP} 
 \mathcal{P} = \sum_{1 \leq t < n} \frac{d(t)}{2^t} 
 + \sum_{\substack{0 \leq j < k \\ j \neq 2}} \frac{d(n+j)}{2^{n+j}}. 
\end{equation}
 Our goal, at this point, is to show that $\mathcal{P}$ is an integer multiple of $2^{-(n-1)}$. In this direction, if $1 \leq t < n$, then 
 $ \frac{d(t)}{2^t} = d(t) 2^{n - 1 - t} \, 2^{-(n-1)} $ is an integer multiple of $2^{-(n-1)}$. If $j \in [0, k) \setminus \{2 \}$, then, as above, 
 the relation in \eqref{2jplus1mid} holds, so that $ \frac{d(n+j)}{2^{n+j}} = \frac{d(n+j)/2^{j+1} }{2^{n-1}} $ is an integer multiple of 
 $2^{-(n-1)}$. So, there is an integer $\mathcal{Q}$ satisfying
\begin{equation}\label{PtoQ} 
 \mathcal{P} = \mathcal{Q} 2^{-(n-1)}. 
\end{equation} 
 From \eqref{dnplus2}, write
\begin{equation}\label{isolatedcase} 
 \frac{d(n+2)}{2^{n+2}} = \frac{3}{4} 2^{-(n-1)}. 
\end{equation}
 By analogy with \eqref{mathcalP}, we write 
\begin{equation}\label{mathcalR}
 \mathcal{R} = \sum_{\ell \geq k} \frac{d(n + \ell)}{2^{n + \ell}}. 
\end{equation}
 Now, the simplified tail estimate in \eqref{simplifytail} gives us that $ \mathcal{R} = 2^{-n} \sum_{\ell \geq k} \frac{d(n + \ell)}{2^{\ell}} 
 \leq 2^{-n} 2^{-k/2} = 2^{-n-k/2}$. Since we are and have been consistently working under the assumption that $k \geq 3$, we 
 find that 
\begin{equation}\label{boundmathcalR}
 \mathcal{R} < 2^{-n-1} = \frac{1}{4} 2^{-(n-1)}. 
\end{equation}
 Now, we rewrite the right-hand side of \eqref{rewriteE} through a combined use of \eqref{PtoQ}, \eqref{isolatedcase}, and 
 \eqref{mathcalR}, with $ E = \mathcal{Q} 2^{-(n-1)} + \frac{3}{4} 2^{-(n-1)} + \mathcal{R}$, where \eqref{boundmathcalR} gives us that 
 $ 0 \leq \mathcal{R} < \frac{1}{4} 2^{-(n-1)}$. So, we find that $ 2^{n-1} E = \mathcal{Q} + \frac{3}{4} + \theta $ for some real number 
 $\theta \in \big[0, \frac{1}{4} \big)$. So, the fractional part of $2^{n-1} E$ is in 
 $\big[ \frac{3}{4}, 1 \big)$. Since $E$ is irrational \cite{Erdos1948}, the binary expansion of $E$ is unique 
 (which avoids any ambiguity regarding the possibility of non-terminating binary decimal expansions for rational values). 
 Consequently, the first two binary digits after the binary point of $2^{n-1}E$ are both $1$.
 Consequently, the $n^{\text{th}}$ and $(n+1)^{\text{th}}$ binary digits after the binary point of $E$ are both $1$. 

 Now, it remains to show (as below) that the $n^{\text{th}}$ and $(n+1)^{\text{th}}$ positions above
 are unbounded as $X \to \infty$. 

 Since $n = r + m A$ and $A$ is divisible by $q_0^3$, while $r + 2 \equiv q_0^2 \bmod q_0^3$, we have that 
\begin{equation}\label{usefornu}
 n + 2 \equiv q_0^2 \bmod q_0^3. 
\end{equation}
 From the congruence in \eqref{usefornu}, we see that $n + 2 - q_0^2 = z q_0^3 $ for some integer $z$, so that $ n + 2 = q_0^2(1 + 
 z q_0)$. This together with the inequivalence $1 + z q_0 \not\equiv 0 \bmod q_0$ allow us to conclude that $\nu_{q_0}(n+2)=2$ holds. 
 Also, our construction gives us that $d(n + 2) =6$. If $z$ is a positive integer with $d(z) = 6$, then either $z = p^5$ for some prime $p$ 
 or $z = p^2 q$ for distinct primes $p$ and $q$. Since $\nu_{q_{0}}(n+2)=2$, the first case is impossible, so that the second case forces that 
 $n + 2 = q_0^2 p$ for some prime $p \neq q_0$. Consequently, the inequality $n + 2 \geq 2 q_0^2$ holds. Since $q_0 > L = \lfloor 
 \Lambda^2 \rfloor \to \infty$ as $X \to \infty$, we find that $n \to \infty$. This proves that the positions at which $11$ occurs 
 are unbounded. 
\end{proof}

\section{Conclusion}
 As a natural follow-up to our construction, one might consider the problem of determining whether or not \emph{every} binary string 
 appears infinitely often in the base-$2$ expansion of $E$. We greatly encourage the exploration of this. 

\subsection*{Acknowledgements}
The author acknowledges extensive interactions with 
 GPT-5.5 Pro during the exploratory and proof-development stages of this work. All AI-generated suggestions were substantially revised, corrected, and independently verified by the author, who assumes full responsibility for the mathematical content.

\end{document}